\theoremstyle{plain}
  \newtheorem{theorem}{Theorem}[section]
  \newtheorem{lemma}[theorem]{Lemma}
  \newtheorem{corollary}[theorem]{Corollary}
\theoremstyle{definition}
  \newtheorem{question}[theorem]{Question}
\theoremstyle{remark}
\numberwithin{equation}{section}
\def\umapright#1{\smash{
   \mathop{\longrightarrow}\limits^{#1}}}
\def\rmapdown#1{\Big\downarrow\rlap
   {$\vcenter{\hbox{$\scriptstyle#1$}}$}}
\def\tempbaselines
\def\diagram#1{\null\,\vcenter{\tempbaselines
\mathsurround=0pt
    \ialign{\hfil$##$\hfil&&\quad\hfil$##$\hfil\crcr
      \mathstrut\crcr\noalign{\kern-\baselineskip}
  #1\crcr\mathstrut\crcr\noalign{\kern-\baselineskip}}}\,}
\def\pullback#1&#2&#3&#4&#5&#6&#7&#8&{
\diagram{#1&\umapright{#2}&#3\cr
\rmapdown{#4}&&\rmapdown{#5}\cr
#6&\umapright{#7}&#8\cr}}
\def\calC{{\mathcal C}}
\def \End{\mathop{\rm End}\nolimits}
\def \Hom{\mathop{\rm Hom}\nolimits}
\def \Rad{\mathop{\rm Rad}\nolimits}
\def\QQ{{\Bbb Q}}
\def\ZZ{{\Bbb Z}}
\begin{document}

\title[Tree Class of  the Auslander-Reiten Quiver]
{Combinatorial Restrictions on the Tree Class of  the Auslander-Reiten Quiver of a Triangulated Category}

\author{Kosmas Diveris}\email{diveris@stolaf.edu}\address{Department of Mathematics\\
St. Olaf College\\ Northfield, MN 55057, USA} 
\author{Marju Purin}\email{purin@stolaf.edu}\address{Department of Mathematics\\
St. Olaf College\\ Northfield, MN 55057, USA}
\author{Peter Webb}
\email{webb@math.umn.edu}
\address{School of Mathematics\\
University of Minnesota\\
Minneapolis, MN 55455, USA}

\subjclass[2000]{}

\keywords{Triangulated category, additive function, Auslander-Reiten quiver, irreducible morphism}

\begin{abstract}
We show that if a connected, Hom-finite, Krull-Schmidt triangulated category has an Auslander-Reiten quiver component with Dynkin tree class then the category has Auslander-Reiten triangles and that component is the entire quiver. This is an analogue for triangulated categories of a theorem of Auslander, and extends a previous result of Scherotzke. We also show that if there is a quiver component with extended Dynkin tree class, then other components  must also have extended Dynkin class or one of a small set of infinite trees, provided there is a non-zero homomorphism between the components. The proofs use the theory of additive functions.
\end{abstract}

\maketitle

\section{Main results}

Let $k$ be a field and let $\calC$ be a $k$-linear triangulated category which is Hom-finite, Krull-Schmidt and connected.
The Auslander-Reiten quiver of $\calC$ is the graph whose vertices are the indecomposable objects of $\calC$ (up to isomorphism) and where we draw an arrow from $X$ to $Y$, labelled with certain multiplicity information, if there is an irreducible morphism from $X$ to $Y$.  We will be concerned with parts of the Auslander-Reiten quiver where  Auslander-Reiten triangles exist. If $U\to V\to W\to U[1]$ is an Auslander-Reiten triangle we write $U=\tau W$ and $W=\tau^{-1}U$ to define the \textit{Auslander-Reiten translate} $\tau$. 

By a \textit{stable component} $\Gamma$ of the Auslander-Reiten quiver we mean a subgraph with the properties: 
\begin{enumerate}
\item for every indecomposable object $M\in\Gamma$, for every $n\in\ZZ$, $\tau^nM$ also lies in $\Gamma$ and,
\item every irreducible morphism beginning or ending at $M$ lies in $\Gamma$.
\end{enumerate}
A stable component $\Gamma$ has the form $\ZZ T/G$ where $T$ is a labelled tree (the \textit{tree class} of $\Gamma$) and $G$ is a group, by \cite{Rie} and \cite{XZ2}. Note that we do not suppose that $\Gamma$ is closed under the shift operation of $\calC$, and also that a stable component is actually a component of the Auslander-Reiten quiver, not a subset of a component with some objects removed.

We say that $\calC$ is \textit{locally finite} if it is Hom-finite and for each indecomposable object $X$ there are only finitely many isomorphism classes of indecomposable objects $Y$ for which $\Hom_\calC(X,Y)\ne 0$ or $\Hom_\calC(Y,X)\ne 0$. This is a strong  condition which implies that Auslander-Reiten triangles do always exist, there is a single Auslander-Reiten quiver component, and it has Dynkin tree class. This was proved by Xiao and Zhu \cite{XZ1, XZ2}. Under the hypothesis of local finiteness, such categories were (partially) classified by Amiot \cite{Ami}.

In our first main result it is not a hypothesis that all Auslander-Reiten triangles exist throughout the entire category $\calC$.

\begin{theorem}
\label{Dynkin-theorem}
Let $\calC$ be a Hom-finite,  Krull-Schmidt, connected triangulated category,
let $\Gamma$ be a stable component of the Auslander-Reiten quiver of $\calC$ and suppose that $\Gamma$ has Dynkin tree class. Then  $\Gamma$ contains every indecomposable object of $\calC$. Furthermore $\calC$ is locally finite. It follows that $\calC$ has Auslander-Reiten triangles and has only finitely many shift-classes of indecomposable objects.
\end{theorem}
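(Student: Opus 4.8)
The plan is to combine the long exact sequences coming from Auslander--Reiten triangles inside $\Gamma$ with the combinatorics of subadditive functions on the translation quiver $\ZZ T$, where $T$ is the Dynkin tree class of $\Gamma$. Two facts about $\ZZ T$ will do the work. First, the mesh relations show that the restriction of an additive function to one $\tau$-slice determines its restriction to the next slice by a fixed linear map $c$ (the Coxeter transformation attached to $T$), and for $T$ Dynkin $c$ has finite order and no nonzero fixed vector. Consequently, if $v,cv,c^{2}v,\dots$ are all coordinatewise nonnegative, then $v=0$, because the sum $\sum_{j}c^{j}v$ over one period is $c$-invariant, hence zero, hence each term vanishes. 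Second, $\ZZ T$ is the universal cover of $\Gamma$, and when $\Gamma$ is infinite the deck group $G$ fixes each $\tau$-slice setwise (any finite subgroup of $\Aut(\ZZ T,\tau)=\langle\tau\rangle\times\Aut(T)$ lies in $\Aut(T)$), while when $\Gamma$ is finite $G$ contains a power of $\tau$; these two regimes must be treated separately.

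For an indecomposable $Y$ of $\calC$ set $d_Y(Z)=\dim_k\Hom_\calC(Y,Z)$ for $Z$ in $\Gamma$. Applying $\Hom_\calC(Y,-)$ to the Auslander--Reiten triangle $\tau Z\to E_Z\to Z\to\tau Z[1]$ and chasing the long exact sequence --- using that $E_Z\to Z$ is right almost split, that $\tau Z\to E_Z$ is left almost split, and that $\tau$ commutes with the shift, so that $Z[-1]\to\tau Z$ is the connecting map of the Auslander--Reiten triangle at $Z[-1]$ --- gives
\[
d_Y(\tau Z)+d_Y(Z)=\sum_{W\to Z}m_{WZ}\,d_Y(W)+\partial_Y(Z),
\]
where the sum is over the arrows of $\Gamma$ into $Z$ with their multiplicities and $\partial_Y(Z)\ge 0$ vanishes unless $Z\cong Y$ or $Z\cong Y[1]$. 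Thus $d_Y$ is a subadditive function on the valued translation quiver $\Gamma$ whose defect is supported on at most two vertices, and applying $\Hom_\calC(-,Y)$ instead yields a second such function $e_Y$.

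Now both main conclusions follow from propagating slice-restrictions by $c$. If $\Gamma$ is infinite, lift $d_Y$ to $\ZZ T$; its defect then lies in finitely many slices, so beyond these slices in the positive direction the lift is additive and its slice-restrictions are $c^{j}$ applied to a fixed nonnegative vector, forcing that vector to be zero; the same in the negative direction shows the support of $d_Y$ meets only finitely many slices and so is finite (the case $\Gamma$ finite is trivial, and the case $\Gamma$ infinite with a nontrivial periodicity is handled by a similar periodic-recursion argument). Hence, once we know $\Gamma=\calC$, the finiteness of all the $d_Y$ and $e_Y$ gives that $\calC$ is locally finite. For $\Gamma=\calC$ itself: if some indecomposable $X$ lies outside $\Gamma$, connectedness of $\calC$ produces such an $X$ with a nonzero homomorphism to or from some object of $\Gamma$; treating the first case, $d_X$ is a nonzero nonnegative subadditive function on $\Gamma$ whose defect is supported at the single vertex $X[1]$ (or is absent), with defect correction $\varepsilon_{0}\ne 0$ a positive multiple of a unit vector. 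Lifting to $\ZZ T$ and propagating by $c$, the positive direction forces the slice-restriction at the defect slice to vanish, and then the negative direction forces the slice-restrictions below to be $-c^{-m}\varepsilon_{0}$ for all $m\ge 1$; nonnegativity and the period-sum argument force $\varepsilon_{0}=0$, a contradiction. So no indecomposable outside $\Gamma$ admits a nonzero homomorphism to or from $\Gamma$, whence $\Gamma=\calC$. Combining, $\calC$ is locally finite; the remaining assertions --- that $\calC$ has Auslander--Reiten triangles and only finitely many shift-classes of indecomposables --- then follow from the theorem of Xiao and Zhu recalled above (together with Amiot's analysis of locally finite triangulated categories) applied to $\calC$.

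The main obstacle I anticipate is that $d_Y$ is very far from being a positive function on $\Gamma$, since it vanishes on all but finitely many objects, so the classical Happel--Preiser--Ringel dichotomy for positive subadditive functions is not directly available; the argument must instead exploit the self-covering structure $\Gamma=\ZZ T/G$ and the finite order of the Coxeter transformation, and must keep precise track of the error terms $\partial_Y$ --- their exact support and total mass --- and of the split between the cases in which $\Gamma$ is finite and in which it is infinite.
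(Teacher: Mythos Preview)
Your approach uses the same engine as the paper's: the functions $d_Y=\dim\Hom(Y,-)$ and $e_Y=\dim\Hom(-,Y)$, the defect formula (the paper's Lemma~2.2), the Coxeter transformation on slices, and the fact that no nonzero nonnegative additive function exists on $\ZZ T$ for $T$ Dynkin. The difference is organizational, and the paper's organization is cleaner in a way that matters.

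The paper does not try to prove $\Gamma=\calC$ in one stroke. It first proves the weaker statement $\calC=\bigcup_{i}\Gamma[i]$: if $M\notin\bigcup_i\Gamma[i]$ then \emph{neither} $M$ nor $M[1]$ lies in $\Gamma$, so $f_M$ is genuinely additive on all of $\Gamma$, and the Dynkin obstruction kills it outright. Only afterwards does the paper show $\Gamma$ is shift-closed, treating the finite and infinite cases separately; in the finite case it does not argue directly but establishes local finiteness and then quotes Xiao--Zhu to get a single component. By contrast, your direct attack on $\Gamma=\calC$ forces you into the case $X\notin\Gamma$ but $X[1]\in\Gamma$, where $d_X$ carries a genuine defect on $\Gamma$. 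Your propagation argument for that case is plausible when $\Gamma\cong\ZZ T$ is infinite, but when $\Gamma$ is finite the lift to $\ZZ T$ is $\tau$-periodic with periodically placed defects, so there is no region ``beyond the defect'' in which to run the period-sum argument; this subcase is not handled in your sketch, and the paper simply avoids it.

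Two smaller points. First, for Dynkin $T$ and $\Gamma$ infinite one actually has $G$ trivial, not merely slice-preserving: any admissible automorphism of $\ZZ T$ not involving a power of $\tau$ would lie in $\Aut(T)$ and would fix a vertex, so cannot act freely; the paper states this as ``factoring out any non-identity admissible group gives a finite quiver.'' Second, your phrase ``with defect correction $\varepsilon_0\ne 0$'' clashes with the parenthetical ``(or is absent)''; the absent case is exactly the easy one and should be dispatched first.
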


In the particular case of bounded derived categories of algebras our result was proved already by Scherotzke \cite{Sch}  using quite different methods: she related irreducible morphisms in the derived category to irreducible morphisms in the category of complexes and chain maps. Our own approach uses only the theory of additive functions. It applies more generally and has the merit of being brief. 

The theorem is a version for triangulated categories of the theorem of Auslander  for modules over an indecomposable finite dimensional algebra, which states that if the Auslander-Reiten quiver of the module category has a finite component then it is the entire quiver. For a triangulated category the condition that there is a finite stable component $\Gamma$ is strong and does imply that the component must have Dynkin tree class. We state this and give the short proof.

\begin{corollary}
Let $\calC$ be a Hom-finite,  Krull-Schmidt, connected triangulated category with a finite stable component $\Gamma$ of its Auslander-Reiten quiver. Then the tree class of $\Gamma$ is a Dynkin diagram and $\Gamma$ contains every indecomposable object of $\calC$.
\end{corollary}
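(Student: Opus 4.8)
The plan is to reduce the Corollary to Theorem~\ref{Dynkin-theorem}: as soon as we know that the tree class of the finite stable component $\Gamma$ is a Dynkin diagram, the theorem gives at once that $\Gamma$ contains every indecomposable object of $\calC$ (and the remaining conclusions). Thus the whole content of the Corollary is the assertion that a \emph{finite} stable component has Dynkin tree class, which is a statement about the combinatorics of $\Gamma$ together with the Hom-dimensions of $\calC$.

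To prove it, fix an indecomposable object $X\in\Gamma$ and consider the function $d(Y)=\dim_k\Hom_\calC(X,Y)$ on the indecomposables $Y\in\Gamma$. Since $\Gamma$ is finite this function is bounded, and it is not identically zero because $d(X)=\dim_k\End_\calC(X)>0$. Applying $\Hom_\calC(X,-)$ to the Auslander--Reiten triangles $\tau Y\to E_Y\to Y\to(\tau Y)[1]$ present throughout $\Gamma$ and using the exactness of $\Hom_\calC(X,\tau Y)\to\Hom_\calC(X,E_Y)\to\Hom_\calC(X,Y)$ gives $\dim_k\Hom_\calC(X,E_Y)\le d(\tau Y)+d(Y)$, i.e.\ $d$ is subadditive on $\Gamma$ with respect to $\tau$. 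Moreover $d$ is \emph{not} additive: additivity at $Y$ would force the above three-term sequence to be short exact, hence $\Hom_\calC(X,E_Y)\to\Hom_\calC(X,Y)$ to be surjective; but for $Y=X$ the identity of $X$ is not in the image of $\Hom_\calC(X,E_X)\to\Hom_\calC(X,X)$, since that would split the Auslander--Reiten triangle at $X$.

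Now invoke the theory of additive functions in the form already used in the proof of Theorem~\ref{Dynkin-theorem}. Writing $\Gamma=\ZZ T/G$ and passing to the tree $T$, a nonzero nonnegative subadditive function on a connected valued graph is automatically everywhere positive (subadditivity at a vertex where the function vanishes forces it to vanish at every neighbour, so by connectedness it would vanish identically). Thus $d$ produces a positive subadditive function on $T$, and by Vinberg's theorem / the Happel--Preis--Ringel lemma such a $T$ is Dynkin, Euclidean, or one of $A_\infty,B_\infty,C_\infty,D_\infty,A_\infty^\infty$; if the function is moreover not additive then $T$ is Dynkin or $A_\infty$. Since $\ZZ A_\infty/G$ is infinite for every admissible $G$ while $\Gamma$ is finite, the case $A_\infty$ is excluded, so $T$ is a Dynkin diagram, and Theorem~\ref{Dynkin-theorem} completes the argument.

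The step requiring the most care is the passage from the function $d$ on the quotient $\ZZ T/G$ to a subadditive function on the tree $T$ that retains enough non-additivity to exclude the Euclidean and $A_\infty^\infty$ alternatives --- on a Euclidean diagram, and on $A_\infty^\infty$, every positive subadditive function is in fact additive, so it is precisely non-additivity that rules these out. Tracking the periodicity imposed by $G$ (for instance by averaging $d$ over the relevant $\tau$-orbits, which preserves subadditivity and, at the vertices lying over $X$, non-additivity) handles this bookkeeping; since it is part of the machinery already developed for Theorem~\ref{Dynkin-theorem}, the proof of the Corollary is short.
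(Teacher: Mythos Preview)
Your proof is correct and follows essentially the same approach as the paper's: both construct a subadditive, non-additive function on $\Gamma$ from Hom-dimensions, pass to the tree $T$, apply the Happel--Preiser--Ringel characterisation to conclude that $T$ is Dynkin, and then invoke Theorem~\ref{Dynkin-theorem}. The only difference is cosmetic: the paper (following the argument of \cite[Theorem~2.3.5]{XZ2}) takes $f(X)=\sum_{M\in\Gamma}\dim\Hom_\calC(M,X)$, summing over all of $\Gamma$ at once so that periodicity is built in from the start, whereas you work with $\dim\Hom_\calC(X,-)$ for a single $X$ and then average over $\tau$-orbits to obtain a $\tau$-invariant function. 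One small inaccuracy in your commentary: this averaging step is not actually ``machinery already developed for Theorem~\ref{Dynkin-theorem}'' in the paper --- it is a standard manoeuvre, but the proof of Theorem~\ref{Dynkin-theorem} itself does not need or use it.
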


\begin{proof}
We may use the argument of \cite[Theorem 2.3.5]{XZ2}: the function 
$$
f(X):=\sum_{M\in\Gamma}\dim\Hom_\calC(M,X)
$$
is subadditive and periodic on $\Gamma$, and this forces $\Gamma$ to have Dynkin tree class by \cite{HPR}. The result now follows by Theorem~\ref{Dynkin-theorem}.
\end{proof}

 We also have a theorem about Auslander-Reiten quiver components which have extended Dynkin tree class.
 
 \begin{theorem}
\label{extended-Dynkin-theorem}
Let $\calC$ be a Hom-finite,  Krull-Schmidt, connected triangulated category with stable Auslander-Reiten quiver components $\Gamma_1$ and $\Gamma_2$. Suppose there are objects $X\in\Gamma_1$ and $Y\in\Gamma_2$ so that either $\Hom(X,Y)\ne0$ or $\Hom(Y,X)\ne0$. If $\Gamma_1$ has an extended Dynkin diagram as its tree class then the tree class of $\Gamma_2$ is either an extended Dynkin diagram or one of the trees $A_\infty$, $B_\infty$, $C_\infty$, $D_\infty$ or $A_\infty^\infty$.
\end{theorem}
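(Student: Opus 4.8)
The plan is to manufacture, from the given nonzero homomorphism, a positive subadditive function on the stable translation quiver $\Gamma_2$, then to invoke the Happel--Preiser--Ringel classification of valued graphs admitting such a function, and finally to use Theorem~\ref{Dynkin-theorem} to rule out the finite Dynkin cases. First I would dispose of the degenerate possibility $\Gamma_1=\Gamma_2$, in which case $\Gamma_2$ already has extended Dynkin tree class and there is nothing to prove; so assume $\Gamma_1\ne\Gamma_2$, and note that since the vertices of an Auslander--Reiten quiver are indecomposable, no object of $\Gamma_1$ is isomorphic to an object of $\Gamma_2$. Suppose first that $\Hom_\calC(X,Y)\ne0$ with $X\in\Gamma_1$, $Y\in\Gamma_2$, and define $f\colon\Gamma_2\to\ZZ$ by $f(N)=\dim_k\Hom_\calC(X,N)$. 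Because $\Gamma_2$ is stable, each $N\in\Gamma_2$ sits in an Auslander--Reiten triangle $\tau N\to E\to N\to(\tau N)[1]$, where $E$ is the direct sum of the objects $N_j$ admitting an irreducible morphism to $N$, each taken with its multiplicity $m_j$. The sink map $E\to N$ is minimal right almost split; as $X$ is indecomposable and $X\not\cong N$, no morphism $X\to N$ is a split epimorphism, so every morphism $X\to N$ factors through $E\to N$ and hence $\Hom_\calC(X,E)\to\Hom_\calC(X,N)$ is onto. Applying $\Hom_\calC(X,-)$ to the triangle and using this surjectivity in the resulting long exact sequence gives
$$
\sum_j m_j\, f(N_j)=\dim_k\Hom_\calC(X,E)\le\dim_k\Hom_\calC(X,\tau N)+\dim_k\Hom_\calC(X,N)=f(\tau N)+f(N),
$$
which is exactly subadditivity of $f$ on $\Gamma_2$; moreover $f\ne0$ since $f(Y)\ne0$. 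If instead $\Hom_\calC(Y,X)\ne0$, the same conclusion follows by the symmetric argument using the source map $\tau N\to E$ (minimal left almost split) together with the functor $\Hom_\calC(-,X)$.

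Next I would use that a nonzero, non-negative subadditive function on a connected stable translation quiver is strictly positive, so $f>0$ on all of $\Gamma_2$. The Happel--Preiser--Ringel theorem on valued graphs admitting a positive subadditive function then applies to the tree class $T_2$ of $\Gamma_2$: it must be a finite Dynkin diagram, an extended Dynkin diagram, or one of $A_\infty$, $B_\infty$, $C_\infty$, $D_\infty$, $A_\infty^\infty$.

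It remains to exclude the finite Dynkin case, and this is the only place the hypothesis on $\Gamma_1$ is needed: if $T_2$ were a Dynkin diagram, then Theorem~\ref{Dynkin-theorem} applied to the component $\Gamma_2$ would force $\Gamma_2$ to contain every indecomposable object of $\calC$, whence $\Gamma_1=\Gamma_2$, contradicting both $\Gamma_1\ne\Gamma_2$ and the assumption that $\Gamma_1$ has extended Dynkin tree class. Hence $T_2$ is extended Dynkin or one of the five named infinite trees, as claimed. The long-exact-sequence bookkeeping and the citation of Happel--Preiser--Ringel are routine; the step I expect to be the main obstacle is the passage from a nonzero, non-negative subadditive function to a strictly positive one, i.e. showing that $f$ cannot vanish on part of $\Gamma_2$. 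Finally I would flag that one should resist building the function as $\sum_{M\in\Gamma_1}\dim_k\Hom_\calC(M,N)$: such a sum typically diverges when $\Gamma_1$ has extended Dynkin class (for instance when $\Gamma_1$ is a tube), whereas a single morphism already does the job.
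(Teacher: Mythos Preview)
Your subadditivity computation for $f(N)=\dim_k\Hom_\calC(X,N)$ on $\Gamma_2$ is correct, but the next step is a genuine gap. The Happel--Preiser--Ringel theorem is a statement about valued \emph{trees}: to constrain $T_2$ you must exhibit a subadditive function on $T_2$ itself, equivalently a $\tau$-invariant subadditive function on $\ZZ T_2$. What you have produced is a subadditive function on the translation quiver $\Gamma_2$, and such a function need not descend to $T_2$. Indeed, your argument as written uses the extended Dynkin hypothesis on $\Gamma_1$ only at the very last step, so if it were valid it would show that \emph{any} stable component receiving a nonzero Hom from an outside indecomposable has tree class on the HPR list. This is false: for a non-Dynkin tree $T$ the quiver $\ZZ T$ always carries positive additive functions, for instance of the shape $f(i,v)=r^{i}\phi(v)$ with $r$ a real eigenvalue of the Coxeter transformation and $\phi$ a corresponding positive eigenvector; when $T$ is wild these are never $\tau$-periodic, and they place no restriction on $T$. (The auxiliary claim that a nonzero non-negative subadditive function on a connected stable translation quiver is strictly positive also fails, already on $\ZZ A_1$.)

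What is missing is precisely the mechanism the paper uses, and it is the only place the extended Dynkin hypothesis on $\Gamma_1$ really enters. First dispose of the case where $\Gamma_2$ is a shift of $\Gamma_1$. Otherwise, for every $M\in\Gamma_2$ the functions $f_M,g_M$ are genuinely additive on $\Gamma_1$ by Lemma~\ref{f-and-g-lemma}, and since $\Gamma_1$ has extended Dynkin tree class, Lemma~\ref{additive-function-existence-lemma}(3) forces them to be $\tau$-periodic on $\Gamma_1$ of bounded period $m$. Using $\Hom(\tau^nM,X)\cong\Hom(M,\tau^{-n}X)$ this transfers to $\tau$-periodicity of $f_X$ and $g_X$ on $\Gamma_2$, still of period $m$. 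Summing over a period now yields a $\tau$-invariant, non-negative, nonzero additive function on $\Gamma_2$, which \emph{is} an additive function on $T_2$, and HPR applies; additivity even rules out the Dynkin case directly, so your appeal to Theorem~\ref{Dynkin-theorem} becomes unnecessary.
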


The above trees are displayed in \cite{DR, HPR, Web1}, for instance. Although it is remarkable that the shape of one quiver component influences the shape of other components in this way, the theorem still allows for many possibilities and we wonder if they can all occur in triangulated categories. We ask the following: 

\begin{question}
With the hypotheses and notation of Theorem~\ref{extended-Dynkin-theorem}, is it possible to have  stable Auslander-Reiten quiver components $\Gamma_1$ and $\Gamma_2$ with $\Hom(X,Y)\ne 0$ for some $X\in \Gamma_1$ and $Y\in \Gamma_2$, so that $\Gamma_1$ and $\Gamma_2$ have  different (finite) extended Dynkin diagrams as their tree classes? Is it even the case that $\Gamma_2$ must  either have the same tree class as $\Gamma_1$ or else have tree class $A_\infty$? 
\end{question}

We mention the definitions of some of the standard terms we have been using. We say that $\calC$ is \textit{Hom-finite} if for every pair of indecomposable objects $X$ and $Y$ in $\calC$, $\dim_k\Hom_\calC(X,Y)$ is finite; we say that $\calC$ is \textit{Krull-Schmidt} if every object in $\calC$ is a finite direct sum of indecomposable objects each of which has a local endomorphism ring; and we say that $\calC$ is \textit{connected} if it is not possible to partition the indecomposable objects of $\calC$ into two classes without non-zero homomorphisms between objects in the different classes.
For an introduction to Auslander-Reiten theory in the setting of triangulated categories see \cite{Hap}.  Note that, according to \cite{RV}, the existence of Auslander-Reiten triangles in $\calC$ is equivalent to the existence of a Serre functor on $\calC$.

\section{Additive functions}

Let $\Gamma$ be a stable component of the Auslander-Reiten quiver of $\calC$. We will say that a function $\phi: \Gamma_0\to\ZZ$ is \textit{additive} if on each Auslander-Reiten triangle $U\to (V_1\oplus\cdots\oplus V_r)\to W\to U[1]$ whose first three terms lie in $\Gamma_0$ we have
$\phi(U)+\phi(W)=\phi(V_1)+\cdots+\phi(V_r)$. We will refer to these first three terms as a \textit{mesh} of the quiver. We say that $\phi$ is \textit{positive} if it takes non-negative values, and somewhere is positive. We say that $\phi$  is \textit{defective} on this mesh \textit{of defect $d>0$}
if $\phi(U)+\phi(W)=\phi(V_1)+\cdots+\phi(V_r)+d$. Because $\Gamma\cong \ZZ T/G$ is a quotient of $\ZZ T$, $\phi$ lifts to a function on the vertices of $\ZZ T$, which is additive or defective according as $\phi$ is on $\Gamma$.

When $T$ is a finite tree it defines a Coxeter group which acts on a vector space $\QQ^{T_0}$ with basis indexed by the vertices of $T$. A \textit{slice} of $\ZZ T$ is a connected subgraph whose vertices are a set of representatives for the $\tau$-orbits in $\Gamma$. Such an $S$ has the same underlying graph as $T$.  When $S$ is a slice of $\ZZ T$ and $f:(\ZZ T)_0\to \ZZ$ is a function, the values $\{f(x)\bigm| x\in S_0\}$ may be regarded as the coordinates of a vector in $\QQ^{T_0}$, which we denote $f(S)$.

The next result is completely standard. Part (3) is a strengthening of a result which appears as  \cite[Cor. 2.4]{Web1}.

\begin{lemma}
\label{additive-function-existence-lemma}
Let $T$ be a finite tree.
\begin{enumerate}
\item If $S$ is a slice of $\ZZ T$ and $f$ is an additive function then $f(\tau S)=cf(S)$, where $c$ is a Coxeter transformation.
\item If $T$ is a Dynkin tree there is no positive additive function on $\ZZ T$.
\item If $T$ is an extended Dynkin diagram then positive additive functions on $\ZZ T$ are periodic, of bounded period (depending on the diagram).
\end{enumerate}
\end{lemma}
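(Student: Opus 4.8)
The plan is to reduce everything to the linear algebra of the Coxeter transformation $c$ acting on $\QQ^{T_0}$. For part (1), I would fix a slice $S$ of $\ZZ T$ and observe that for each vertex $x \in S$ the mesh relation at $x$ expresses $f(\tau x)$ in terms of $f(x)$ and the values $f(y)$ at the neighbours $y$ of $x$ in $S$ — more precisely, writing the neighbours of $x$ in $\ZZ T$, half lie on the ``$S$ side'' and half on the ``$\tau S$ side,'' and the additivity equation rearranges to give $f(\tau x)$ as a $\ZZ$-linear combination of the values on $S$. Carrying this out vertex by vertex, one checks that the resulting linear map $f(S) \mapsto f(\tau S)$ is exactly a product of the simple reflections $s_i$ of the Coxeter group, taken in an order compatible with the orientation of the slice; this is the standard identification of mesh relations with the Coxeter transformation, and I would simply cite the bookkeeping rather than reproduce it. So $f(\tau S) = c\, f(S)$.

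For part (2), suppose $T$ is Dynkin and $\phi$ is a positive additive function on $\ZZ T$. Apply part (1): the vector $v = \phi(S) \in \QQ^{T_0}$ satisfies $c^n v = \phi(\tau^n S)$ for all $n \in \ZZ$, and by positivity every entry of $c^n v$ is a non-negative integer. But when $T$ is Dynkin the Coxeter transformation $c$ has finite order $h$ (the Coxeter number) and — crucially — no eigenvalue equal to $1$, since the associated bilinear form is positive definite, so $c$ has no nonzero fixed vector. Hence $v + cv + \dots + c^{h-1}v$ is a $c$-fixed vector, therefore zero; being a sum of vectors with non-negative entries, each summand is zero, so $v = 0$ and $\phi$ vanishes on $S$, hence on all of $\ZZ T$, contradicting ``somewhere positive.'' (If one prefers to avoid invoking the form directly, the same conclusion follows from the classification of eigenvalues of Coxeter elements of Dynkin type, all of which are primitive $h$-th roots of unity with $1$ excluded.)

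For part (3), let $T$ be extended Dynkin and $\phi$ a positive additive function, with $v = \phi(S)$. Now the Coxeter transformation is no longer of finite order, but its eigenvalues still lie on the unit circle: the associated form is positive \emph{semi}definite with one-dimensional radical, and $c$ acts on $\QQ^{T_0}$ with all eigenvalues of absolute value $1$, having a single Jordan block of size $2$ for the eigenvalue $1$ (spanned by the radical vector $\delta$, the minimal imaginary root) and being semisimple with roots-of-unity eigenvalues on a complement. The orbit $\{c^n v\}_{n \in \ZZ}$ has entries that are non-negative integers, hence is a bounded set. A size-$2$ Jordan block for eigenvalue $1$ would force the $\delta$-component of $c^n v$ to grow linearly in $n$ unless the generalized-eigenvector component of $v$ vanishes; boundedness kills it, so $v$ lies in the span of the semisimple part together with $\delta$, on which $c$ acts with finite order $m$ (the lcm of the orders of its eigenvalues, which depends only on the diagram). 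Therefore $c^m v = v$, i.e.\ $\phi(\tau^m S) = \phi(S)$; since a slice and its image determine $\phi$ everywhere on $\ZZ T$ via the mesh relations, $\phi$ is periodic with period dividing $m$. I expect the main obstacle to be the spectral analysis in part (3) — specifically, arguing cleanly that boundedness of the integer orbit forces the generalized-eigenvalue-$1$ component to vanish and then extracting a uniform bound on the period depending only on the diagram type; everything else is the routine translation between meshes and reflections established in part (1).
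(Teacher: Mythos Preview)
Parts (1) and (2) are correct and match the paper's approach, which simply cites \cite[Lemma 2.3]{Web1} and \cite[Lemma 1.7]{DR}; your averaging argument in (2) is the standard proof of the Dynkin case.

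In part (3) there is a gap. You write that the orbit $\{c^n v\}_{n\in\ZZ}$ ``has entries that are non-negative integers, hence is a bounded set,'' and then invoke boundedness to kill the Jordan-block component. But non-negativity does not give boundedness --- indeed, for an orbit under a matrix all of whose eigenvalues are roots of unity, boundedness is equivalent to the periodicity you are trying to establish, so the step is circular as written. What non-negativity actually gives is that the orbit is bounded \emph{below}, and that already suffices: any size-$2$ Jordan block at eigenvalue $1$ contributes a term behaving like $n\delta$, so in one direction of $n$ every coordinate tends to $-\infty$. Replacing ``bounded'' by ``bounded below'' repairs your argument.

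The paper, following \cite[Lemma 1.7]{DR}, bypasses the Jordan analysis altogether. It uses only that $W$ acts on $\QQ^{T_0}/N$ (with $N$ the span of the null root $\delta$) through a finite quotient, so the image $\bar c$ of $c$ has some finite order $m$. Then $c^m v - v\in N$, say $c^m v = v + \alpha\delta$, and iterating gives $c^{km}v = v + k\alpha\delta$ for all $k\in\ZZ$; since $\delta$ has strictly positive entries, non-negativity for all $k$ forces $\alpha=0$, i.e.\ $c^m v = v$. This is shorter and does not require verifying the precise Jordan form of the affine Coxeter element (a single size-$2$ block at $1$, semisimple elsewhere), which you asserted but did not justify.
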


\begin{proof}(1) This is well-known; a possible reference is \cite[Lemma 2.3]{Web1}.

(2) This is immediate from \cite[Lemma 1.7]{DR}.

(3) We deduce this result also from \cite[Lemma 1.7]{DR}. Since $T$ is an extended Dynkin diagram  with vertex set $T_0$ there is a null-root $\mathbf{n}\in \QQ^{T_0}$ which is fixed by the Weyl group $W$ and which spans a space $N$, so that the action of $W$ on $\QQ^{T_0}/N$ is as a finite group $\overline W$. Write $m$ for the order of the image $\overline c$ in $\overline W$ of a Coxeter transformation $c\in W$. It is shown in the proof of \cite[Lemma 1.7]{DR} that if $x\in \QQ^{T_0}$ is a vector for which $c^t x$ never has negative components, then $c^mx=x$. From this it follows that the positive additive function $f$ is periodic, with period dividing $m$.
\end{proof}

For each object $M$ of $\calC$ we let $f_M$ and $g_M$ be the functions defined on objects $N$ of $\calC$ by $f_M(N)=\dim\Hom_\calC(M,N)$ and $g_M(N)=\dim\Hom_\calC(N,M)$. It is the fact that these functions are additive on almost all of $\calC$ which is the new ingredient in our approach compared to previous use of additive functions.

\begin{lemma}
\label{f-and-g-lemma}
Let $\Gamma$ be a stable Auslander-Reiten quiver component and let $M$ be an indecomposable object. Let $d:=\dim \End(M)/\Rad\End(M)$.
\begin{enumerate}
\item
$f_M$ is additive on every mesh of $\Gamma$ except the meshes with $M$ and $M[1]$ as the right hand object. 
\item $g_M$ is additive on every mesh of $\Gamma$ except the meshes with $M$ and $M[-1]$ as the left hand object. 
\end{enumerate}
On the excluded meshes the functions are defective, of defect $d$ if $M\not\cong M[1]$ and of defect $2d$ if $M\cong M[1]$. 
\end{lemma}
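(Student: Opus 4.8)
The plan is to apply the cohomological long exact sequence obtained by applying $\Hom_\calC(M,-)$ (respectively $\Hom_\calC(-,M)$) to an Auslander-Reiten triangle. Fix an Auslander-Reiten triangle $U\xrightarrow{a} V\xrightarrow{b} W\xrightarrow{c} U[1]$ with $V=V_1\oplus\cdots\oplus V_r$ and all of $U,V,W$ in $\Gamma$. Applying the homological functor $\Hom_\calC(M,-)$ gives a long exact sequence of $k$-vector spaces
\begin{equation*}
\cdots\to \Hom(M,U)\to \Hom(M,V)\to \Hom(M,W)\xrightarrow{c_*} \Hom(M,U[1])\to\cdots,
\end{equation*}
and since a long exact sequence has Euler characteristic zero over any finite stretch, additivity $f_M(U)+f_M(W)=f_M(V)$ holds precisely when the connecting map $c_*\colon\Hom(M,W)\to\Hom(M,U[1])$ is zero; if instead $c_*$ has image of dimension $e$, then $f_M$ is defective of defect exactly $e$ on that mesh. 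So the whole statement reduces to computing the rank of $c_*$, i.e.\ identifying for which $W$ the map $c_*$ is nonzero and what its rank is.

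The key input is the defining property of an Auslander-Reiten triangle: the morphism $c\colon W\to U[1]$ is nonzero, and more precisely a morphism $\varphi\colon M\to W$ with $M$ indecomposable factors through $b\colon V\to W$ if and only if $\varphi$ is not a split epimorphism; equivalently $c\varphi=0$ iff $\varphi$ is not a split epi. Thus $c_*$ vanishes on $\Hom(M,W)$ unless $M$ is a direct summand of $W$, and since $W$ is indecomposable the only way $c_*\neq 0$ is $M\cong W$. For the right-hand object $M$ itself, the kernel of $c_*$ on $\End(M)$ consists of the non-automorphisms, i.e.\ $\Rad\End(M)$, so the image has dimension $\dim\End(M)/\Rad\End(M)=d$; hence $f_M$ is defective of defect $d$ on the mesh ending at $M$. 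The only remaining subtlety is the mesh ending at $M[1]$: here $U[1]\cong M[1]$, i.e.\ $U\cong M$, so the connecting map is $c_*\colon\Hom(M,W)\to\Hom(M,M[1])\cong\End(M)[1]$, and one argues dually — using that $c$ is, up to the shift, the ``co-universal'' map detecting split monomorphisms out of $U$ — that its image again has dimension $d$. When $M\cong M[1]$ these two exceptional meshes coincide (the mesh ending at $M=M[1]$ appears once but the triangle contributes a connecting map both ``from the $W$-side'' and ``from the $U[1]$-side''), and the two contributions of defect $d$ add, giving defect $2d$. The statement for $g_M$ is entirely dual, applying $\Hom_\calC(-,M)$ and using the Auslander-Reiten property of the morphism $a\colon U\to V$.

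The main obstacle I anticipate is the careful bookkeeping in the self-dual case $M\cong M[1]$: one must be sure that the single mesh with right-hand object $M=M[1]$ genuinely receives \emph{two} independent defect contributions — one because $M$ is the right-hand term (kernel of $c_*$ on $\End(M)$ is $\Rad$) and one because $M[1]=U[1]$ with $U=M$ (the connecting map lands in $\End(M)$ with cokernel related to $\Rad$) — and that these do not overlap or cancel in the Euler-characteristic count. Making this precise requires writing down the relevant stretch of the long exact sequence around $\Hom(M,M)\to\Hom(M,M)\to\cdots$ and tracking dimensions term by term, rather than invoking vanishing of a single connecting map. The rest is routine once the factorization property of Auslander-Reiten triangles is invoked in the form ``$c\varphi=0\iff\varphi$ not split epi'' and its dual.
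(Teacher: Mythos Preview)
Your overall approach---apply $\Hom(M,-)$ to the Auslander--Reiten triangle and read off the defect from the long exact sequence---is exactly what the paper does. But there is a genuine error in your bookkeeping that causes the argument for the mesh ending at $M[1]$ to go off the rails.

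The defect on a mesh $U\to V\to W$ is \emph{not} simply the rank of $c_*=\delta_0\colon\Hom(M,W)\to\Hom(M,U[1])$. From the long exact sequence one computes
\[
f_M(U)-f_M(V)+f_M(W)=\dim\ker\alpha_0+\dim\operatorname{im}\delta_0=\dim\operatorname{im}\delta_{-1}+\dim\operatorname{im}\delta_0,
\]
so \emph{both} connecting maps $\delta_{-1}\colon\Hom(M,W[-1])\to\Hom(M,U)$ and $\delta_0$ contribute. The AR lifting property, applied to the shifted triangle, gives $\delta_{-1}=0$ unless $M\cong W[-1]$, i.e.\ $W\cong M[1]$, and in that case $\operatorname{rank}\delta_{-1}=d$. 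That is the correct reason the mesh ending at $M[1]$ is defective.

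Your attempted explanation for that mesh asserts ``here $U[1]\cong M[1]$, i.e.\ $U\cong M$'', but this is false: for the mesh with $W=M[1]$ one has $U=\tau(M[1])$, which has no reason to be $M$. You have conflated the mesh \emph{ending} at $M[1]$ with the mesh \emph{starting} at $M$; these are different unless $\tau^{-1}M\cong M[1]$. Once both $\delta_{-1}$ and $\delta_0$ are tracked, the case $M\cong M[1]$ becomes transparent without any hand-waving: the single mesh with $W\cong M\cong M[1]$ has both connecting maps of rank $d$, hence defect $2d$.
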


\begin{proof}
This follows from \cite[Proposition 3.1]{Web2}. We present the argument for $f_M$ here. The argument for $g_M$ is dual. 

Let $U\to V\to W\to U[1]$ be an Auslander-Reiten triangle. In the long exact sequence
$$
\begin{aligned}
\cdots&\xrightarrow{\beta_{-1}}\Hom(M,W[-1])\cr
\xrightarrow{\delta_{-1}}\Hom(M,U)\xrightarrow{\alpha_0}\Hom(M,V)&\xrightarrow{\beta_0}
\Hom(M,W)\cr
\xrightarrow{\delta_0}\Hom(M,U[1])\xrightarrow{\alpha_1}\rlap{$\cdots$}\phantom{\Hom(M,V)}&\phantom{\xrightarrow{\beta_0}
\Hom(M,W)}\cr
\end{aligned}
$$
the Auslander-Reiten lifting property of the triangle shows that $\beta_0$ is epi and hence $\delta_0=0$ except when $M\cong W$, and $\beta_{-1}$ is epi and hence $\delta_{-1}=0$ except when $M\cong W[-1]$. Thus, unless $W\cong M$ or $M[1]$, the middle sequence is a short exact sequence, showing that $f_M$ is additive on this mesh in this case. When $W\cong M$ but $M\not\cong M[1]$ then $M\not\cong W[-1]$ and so the only $\delta$ which is non-zero is $\delta_0$, which has rank $d$. This shows that $f_M$ has defect $d$ on this mesh. The argument is similar when $M\cong W[-1]$. When $M\cong M[1]\cong W$ both $\delta_{-1}$ and $\delta_0$ have rank $d$, so that $f_M$ has defect $2d$ on this mesh.
\end{proof}

\section{Proof of Theorem \ref{Dynkin-theorem}}

The proof of Theorem~\ref{Dynkin-theorem} proceeds in three steps. We suppose that $\Gamma$ is a stable component with tree class $T$, so that $\Gamma\cong \ZZ T/G$ for some group $G$.

Step 1: We show that every object in $\calC$ is the shift of an object in $\Gamma$:
$$
\calC=\bigcup_{i\in\ZZ} \Gamma[i].
$$
For, if not, there is an indecomposable object $M\not\in \bigcup_{i\in\ZZ} \Gamma[i]$ with either $f_M\ne0$ or $g_M\ne 0$ on $\Gamma$. Let us say $f_M\ne 0$, as the argument is similar if $g_M\ne 0$. Then $f_M$ is positive and additive on $\Gamma$ and hence gives a positive additive function on $\ZZ T$, which is not possible if $T$ is Dynkin by Lemma~\ref{additive-function-existence-lemma}(2). This impossibility establishes the assertion.

Step 2: We show that when $\Gamma$ is finite it is closed under shift, and hence $\calC$ is locally finite. To prove this, suppose that $\Gamma$ is finite. If there are only finitely many distinct shifts $\Gamma[i]$ then, by Step 1, $\calC$ has only finitely many indecomposable objects and so is locally finite. From this it follows that $\calC$ has only one component, by~\cite[Theorem 2.3.5]{XZ1}. Otherwise, if there are infinitely many distinct $\Gamma[i]$, it follows that all the $\Gamma[i]$ must be distinct. Let $M$ be an indecomposable object, so that $M\in\Gamma[i]$ for some $i$. By Lemma~\ref{f-and-g-lemma} $f_M$  only fails to be additive on $\Gamma[i]$ and $\Gamma[i+1]$, so by Lemma~\ref{additive-function-existence-lemma} it is only non-zero on these components, and similarly $g_M$ is only non-zero on $\Gamma[i]$ and $\Gamma[i-1]$. This shows that $\calC$ is locally finite, and so there is only one quiver component, again by~\cite[Theorem 2.3.5]{XZ1}.

Step 3: 
We show that when $\Gamma$ is infinite it is closed under shift and locally finite. Assuming $\Gamma$ is infinite we must have $\Gamma\cong \ZZ T$, because factoring out any non-identity admissible group of automorphisms of $\ZZ T$ gives a finite quiver (see, for instance, \cite[Sec. 3]{XZ2}). Let $M$ be any object in $\Gamma$. Then $f_M$ is non-zero on $\Gamma$ (because $f_M(M)\ne 0$), and $f_M$ is additive everywhere on $\Gamma$ except on the mesh whose right hand term is $M$, and also on the mesh whose right hand term is $M[1]$ (if it happens to lie in $\Gamma$). To the left and right of these meshes $f_M$ is periodic, because the Coxeter transformation has finite order. From each of these periodic regions we may extend $f_M$ to a periodic, non-negative additive function on the whole of $\Gamma$, which must be zero. Thus $f_M$ is zero both to the left and to the right of $M$ and $M[1]$. At this point we may deduce that $M[1]$ does lie in $\Gamma$ because otherwise we would deduce that $f_M$ must be 0 at $M$, and also by the same reasoning that $M\ne M[1]$. We conclude that $\Gamma[1]=\Gamma$, so $\Gamma$ is closed under shift. 

Furthermore we have seen that $f_M$ is non-zero only on objects which lie between $M$ and $M[1]$, which is a finite set of indecomposable objects. Similarly $g_M$ is non-zero  only on a finite set of indecomposable objects. This shows that $\calC$ is locally finite and completes the proof of Theorem~\ref{Dynkin-theorem}.

It is not part of the proof, but we may note that the dimensions of Hom spaces between $M$ and other objects in $\calC$ are now completely determined as the unique function $f_M$ which is additive everywhere except on the meshes terminating at $M$ and $M[1]$, where it has defect $d$.

\section{Proof of Theorem~\ref{extended-Dynkin-theorem}}

We suppose that  $\Gamma_1$ and  $\Gamma_2$ are stable components of the Auslander-Reiten quiver of $\calC$ and that $\Gamma_1$ has an extended Dynkin diagram as its tree class.
\medskip

If $\Gamma_2$ is a shift of $\Gamma_1$ then it has the same tree class, which is an extended Dynkin diagram, and we are done. Thus we may suppose that $\Gamma_2$ is not a shift of $\Gamma_1$ and it follows, by Lemma~\ref{f-and-g-lemma} that for each $M$ in $\Gamma_2$ the functions $f_M$ and $g_M$ are additive on $\Gamma_1$. They are also non-negative, and hence by Lemma~\ref{additive-function-existence-lemma} they are always periodic, of bounded period. 
\medskip

Since $\Hom(\tau^n M,X)\cong \Hom(M,\tau^{-n}X)$ it now follows that all functions $f_X$ and $g_X$ are periodic on $\Gamma_2$ when $X\in\Gamma_1$, of bounded period. We can find a non-zero such function. This enables us to put a non-negative additive function on the tree of $\Gamma_2$, which implies that it must be one of the trees listed, by \cite{HPR}.

\end{document}